\documentclass{article}
\usepackage[T1]{fontenc}
\usepackage[utf8]{inputenc}
\usepackage{geometry}
\geometry{verbose,margin=0.95in}
\usepackage{amsmath}
\usepackage{amsthm}
\usepackage[shortlabels]{enumitem}
\usepackage[dvipsnames]{xcolor} 
\usepackage[color=Orange!50!white, textwidth=22mm]{todonotes}
\usepackage[unicode=true, bookmarks=false, breaklinks=false,
 pdfborder={0 0 1},
 colorlinks=false,
 hidelinks=true]
 {hyperref}
\usepackage[noabbrev,capitalize,nameinlink]{cleveref}
\usepackage{comment}
\makeatletter
\theoremstyle{plain}
\newtheorem{thm}{\protect\theoremname}
\theoremstyle{plain}
\newtheorem{lem}[thm]{\protect\lemmaname}

\theoremstyle{remark}

\theoremstyle{plain}

\theoremstyle{plain}

\theoremstyle{plain}

\theoremstyle{plain}

\usepackage{amssymb}

\makeatother

\providecommand{\claimname}{Claim}
\providecommand{\corollaryname}{Corollary}
\providecommand{\lemmaname}{Lemma}
\providecommand{\theoremname}{Theorem}
\providecommand{\problemname}{Problem}
\providecommand{\definitionname}{Definition}

\renewcommand{\epsilon}{\varepsilon}

\newcommand{\mH}{\mathcal{H}}
\newcommand{\mL}{\mathcal{L}}
\newcommand{\mR}{\mathcal{R}}
\newcommand{\mD}{\mathcal{D}}
\newcommand{\mI}{\mathcal{I}}
\newcommand{\mC}{\mathcal{C}}
\newcommand{\mB}{\mathcal{B}}

\newcommand{\maF}{\mathbb{F}}

\title{Multipartite nearly orthogonal sets over finite fields}
\author{Rajko Nenadov \footnote{School of Computer Science, University of Auckland, New Zealand. Email: rajko.nenadov@auckland.ac.nz. Research supported by the New Zealand Marsden Fund.} \and Lander Verlinde \footnote{School of Computer Science, University of Auckland, New Zealand. Email: lver263@aucklanduni.ac.nz.}}
\date{}

\begin{document}

\maketitle

\begin{abstract}       
    For a field $\maF$ and integers $d, k$ and $\ell$, a set $A \subseteq \maF^d$ is called $(k,\ell)$-nearly orthogonal if all vectors in $A$ are non-self-orthogonal and every $k+1$ vectors in $A$ contain $\ell + 1$ pairwise orthogonal vectors. 
    Recently, Haviv, Mattheus, Milojevi\'{c} and Wigderson have improved the lower bound on nearly orthogonal sets over finite fields, using counting arguments and a hypergraph container lemma. They showed that for every prime $p$ and an integer $\ell$, there is a constant $\delta(p,\ell)$ such that for every field $\maF$ of characteristic $p$ and for all integers $d \geq k \geq \ell + 1$, $\maF^d$ contains a $(k,\ell)$-nearly orthogonal set of size $d^{\delta k / \log k}$. This nearly matches an upper bound $\binom{d+k}{k}$ coming from Ramsey theory. Moreover, they proved the same lower bound for the size of a largest set $A$ where for any two subsets of $A$ of size $k+1$ each, there is a vector in one of the subsets orthogonal to a vector in the other one. We prove a common generalisation of this result, showing that essentially the same lower bound holds for the size of a largest set $A \subseteq \mathbb{F}^d$ with the stronger property that given any family of subsets $A_1, \ldots, A_{\ell+1} \subseteq A$, each of size $k+1$, we can find a vector in each $A_i$ such that they are all pairwise orthogonal. Rather than combining both counting and container arguments, we make use of a multipartite asymmetric container lemma that allows for non-uniform co-degree conditions. This lemma was first discovered by Campos, Coulson, Serra and Wötzel, and we provide a new and short proof for this lemma.\\
    

\end{abstract}

\section{Introduction}
Consider an arbitrary field $\maF$ and integers $d, k,\ell$ such that $k \geq \ell$. A set $A \subseteq \maF^d$ of non-self-orthogonal vectors such that for any $k+1$ vectors there is a subset of size $\ell+1$ of pairwise orthogonal vectors is called a $(k,\ell)$-nearly orthogonal set. The largest size of such a set is denoted by $\alpha(d,k,\ell, \maF)$. The question of determining this number for $\maF = \mathbb{R}$ was posed by Erd\H{o}s in 1988 (see \cite{chawin2024nearly}). 

An easy lower bound when $\ell =1$ can be found by considering the vectors of $k$ pairwise disjoint orthogonal bases of $\mathbb{R}^d$, showing that the minimal size is at least $k d$. Erd\H{o}s conjectured this to be tight for $k = 2$, which was proven in 1991 by Rosenfeld \cite{rosenfeldalmost}. However, this lower bound is not tight for larger $k$, as was shown by Füredi and Stanley \cite{furedi1992sets}, proving the existence of a (5,1)-nearly-orthogonal set in $\mathbb{R}^4$ of size 24. A more general lower bound was given by Alon and Szegedy \cite{alon1999large} (extending a result of Frankl and R\"odl \cite{frankl87forbidden}), showing that for every integer $\ell$, there exists a constant $\delta(\ell) > 0$ such that whenever $k \geq \max\{\ell, 3\}$, $\alpha(d,k,\ell, \mathbb{R}) \geq d^{\delta \log k/ \log \log k}$. 

More recently, Balla~\cite{balla2023orthonormal} considered a bipartite version of this problem when $\ell = 1$.
For integers $d,k$ and a field $\maF$, let $\beta(d,k,\maF)$ denote the maximum size of a set $A \subseteq \maF^d$ of non-self-orthogonal vectors such that for any two subsets $A_1, A_2 \subseteq A$ of size $k+1$ each, there exist $v_1 \in A_1$ and $v_2 \in A_2$ that are orthogonal. It clearly follows that $\alpha(d,k,1,\maF) \geq \beta(d,k,\maF)$, as one can consider $A_1 = A_2$. For this extended notion Balla proved that for all integers $d$ and $k \geq 3$, there exists a constant $\delta > 0$ such that $\beta(d,k,\mathbb{R}) \geq d^{\delta \log k / \log \log k}$, matching the lower bound on $\alpha(d,k,1,\mathbb{R)}$ by Alon and Szegedy. Nevertheless, this lower bound is still rather far away from the best current upper bound $\alpha(d,k,1,\mathbb{R}) = O(d^{(k+1)/3)})$ for fixed $k$, proven by Balla, Letzter and Sudakov \cite{balla2020H-free}.  

Besides $\mathbb{R}$, the maximal size of nearly orthogonal sets has also been studied in finite fields. This was first motivated by Codenotti, Pudlák and Resta \cite{codenotti2000some}, who showed the link between nearly-orthogonal sets and computational complexity of arithmetic circuits. 
Golovnev and Haviv \cite{golovnev2022generalized} proved the existence of a constant $\delta > 0$ such that, for infinitely many integers $d$, $\alpha(d, 2,1, \maF_2) \geq d^{1+\delta}$, showing a clear difference between nearly orthogonal sets in finite fields compared to the reals, where the upper bound by Balla et al. implied $\alpha(d,2,1,\mathbb R) = O(d)$. This result was improved by Bamberg, Bishnoi, Ihringer and Ravi~\cite{bamberg2024ramsey}, giving an explicit construction of a (2,1)-nearly orthogonal set in $\maF_2^d$ of size $d^{1.2895}$. Chawin and Haviv \cite{chawin2024nearly} considered the bipartite version and proved that for every prime $p$, there exists a positive constant $\delta(p)$ such that for every field $\maF$ of characteristic $p$ and for all integers $k \geq 2$ and $d \geq k^{1/(p-1)}$, it holds that $\beta(d,k,\mathbb{F}) \geq d^{\delta k^{1/(p-1)}/ \log k}$. Hence, for $p=2$, it follows that 
\begin{equation} \label{eq:F2}
    \alpha(d,k,1,\maF_2) \geq \beta(d,k,\maF_2) \geq d^{\Omega(k / \log k)}. 
\end{equation}
This almost matches an upper bound that follows easily from Ramsey theory, up to the logarithmic term in the exponent: consider an arbitrary $(k,1)$-almost orthogonal set in $\maF^d$ and construct its orthogonality graph, where the vertices correspond to the vectors and  there is an edge if the vectors are orthogonal. Clearly this graph does not contain a clique of size $d+1$, nor does it have an independent set of size $k+1$, because that would contradict that the set is $(k,1)$-almost orthogonal. Hence, the size of the set is upper bounded by the Ramsey number $R(d+1, k+1)$. The upper bound on $R(d+1, k+1)$ by Erd\H{o}s and Szekeres \cite{erdos1935combinatorial} gives $\alpha(d,k,1,\maF_2) < \binom{d+k}{k}$. For $d \gg k$, this implies $\alpha(d,k,1,\maF_2) \leq d^{\Theta(k)}$. 

In a recent paper, Haviv, Mattheus, Milojevi\'{c} and Wigderson \cite{haviv2024larger} obtained the bound matching \eqref{eq:F2} for fields of characteristic larger than 2, and also extended it to the non-bipartite setup for arbitrary $\ell > 1$. Specifically, they showed that for every prime $p$ and integer $\ell \geq 1$, there exist constants $\delta_1(p), \delta_2(p,\ell) > 0$, such that for every field $\maF$ of characteristic $p$ and for all integers $d \geq k \geq \ell +1$, it holds that     
    \begin{align*}
        \beta(d,k,\maF) &\geq d^{\delta_1 k/ \log k} \\
        \alpha(d,k,\ell,\maF) &\geq d^{\delta_2 k / \log k},
    \end{align*} 
    proving that $\alpha(d,k,\ell,\maF) \geq \beta(d,k,\maF)$ also holds.
We propose a generalisation of this result. Instead of considering $\alpha(d,k,\ell, \maF)$ and $\beta(d,k,\maF)$ separately, we consider a common generalisation $\beta(d,k,\ell, \maF)$ defined as the maximum size of a set $A \subseteq \maF$ of non-self-orthogonal vectors such that for any $\ell +1$ subsets $A_1, \dots, A_{\ell +1} \subseteq A$ of size $k+1$ each, there exist $v_1 \in A_1, \dots, v_{\ell+1} \in A_{\ell+1}$ that are all pairwise orthogonal. We prove analogous bounds on $\beta(d, k, \ell, \mathbb{F})$.

\begin{thm}
\label{thm: orthogonal result}
    For every prime $p$ and integer $\ell \geq 1$, there exists a constant $\delta = \delta(p,\ell) > 0$ such that for every field $\maF$ of characteristic $p$ and for all integers $d \geq k \geq \ell + 1$, it holds that 
    \[ \beta(d,k,\ell, \maF) \geq d^{\delta  k/ \log k}.  \]
\end{thm}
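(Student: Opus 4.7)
The plan is to adapt the probabilistic framework of Haviv, Mattheus, Milojevi\'c and Wigderson~\cite{haviv2024larger}, replacing their use of hypergraph containers for $K_\ell$-free subgraphs with the multipartite variant provided by Lemma~\ref{lemma:underpin}. The underlying object is the orthogonality graph $G$ on the non-self-orthogonal vectors of $\maF^d$: it is known to be pseudorandom, with edge density of order $1/p$ and controlled co-degrees, and a \emph{crossing} copy of $K_{\ell+1}$ across subsets $A_1, \dots, A_{\ell+1}$ of $V(G)$ corresponds exactly to $\ell+1$ pairwise orthogonal vectors, one in each $A_i$, as demanded by $\beta(d,k,\ell,\maF)$.

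The engine of the argument is Lemma~\ref{lemma:underpin}, applied (with $\ell$ replaced by $\ell+1$) to the hypergraph of crossing $K_{\ell+1}$'s in $G$. This should yield a small family $\mc{C}$ of containers $S \subseteq V(G)$, each of bounded size $b$, with the property that every ``bad'' tuple $(U_1, \dots, U_{\ell+1})$ of $(k+1)$-subsets of $V(G)$ admitting no crossing $K_{\ell+1}$ has at least one $U_i$ contained entirely in some $S \in \mc{C}$. I would then sample a random $R \subseteq V(G)$ by including each vertex independently with probability $\rho$, tuned so that
\[
|\mc{C}| \binom{b}{k+1} \rho^{k+1} \leq 1/2.
\]
A union bound then guarantees that with positive probability every container meets $R$ in at most $k$ vertices, ruling out any bad $(\ell+1)$-tuple supported on $R$, while $\Ex[|R|] = \rho |V(G)|$ remains large enough to yield an initial polynomial bound $|R| \geq d^{c_0}$ with $c_0 = c_0(p,\ell,k) > 0$.

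To reach the exponent $\delta k / \log k$ I would then apply the Alon--Szegedy tensor-product amplification~\cite{alon1999large}, as in~\cite{haviv2024larger}. Since $\langle u \otimes v,\, u' \otimes v' \rangle = \langle u,u'\rangle \langle v,v'\rangle$, tensor powers of a base set $A_0 \subseteq \maF^d$ preserve pairwise orthogonality and hence the multipartite crossing property, at the cost of only a mild and controllable loss in the parameters. Choosing the tensor order $t$ appropriately turns the base estimate into the claimed bound $d^{\delta k/\log k}$.

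The principal obstacle is the container step: verifying that the auxiliary hypergraph encoding crossing $K_{\ell+1}$'s in the pseudorandom graph $G$ satisfies the hypotheses of Lemma~\ref{lemma:container} fed into Lemma~\ref{lemma:underpin}. Uniform co-degree bounds are not sufficient here, because for a subset $T$ of vertices the number of crossing $K_{\ell+1}$'s extending $T$ depends genuinely on how $T$ is distributed among the $\ell+1$ colour classes of the intended clique. This is precisely where the non-uniform flexibility developed in the paper becomes essential; once the appropriate co-degree estimates across all configurations of $T$ are in place, the remainder of the argument reduces to a union-bound optimisation exploiting the spectral pseudorandomness of $G$ and a standard tensor lift.
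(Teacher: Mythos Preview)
Your outline has the right ingredients but assembles them in the wrong order, and this creates a genuine quantitative gap at the very first step. If you apply Lemma~\ref{lemma:underpin} directly to the orthogonality graph on $\maF_p^d$, then $n=p^d-1$ and the family $\mC$ has size at most $n^{C\log n}$, so $\log|\mC|$ is of order $d^2$. Your union-bound condition $|\mC|\binom{b}{k+1}\rho^{k+1}\le 1/2$ with $b\le C\lambda\approx C\sqrt{n}$ then forces
\[
\rho n \;\le\; n^{\,1/2 - C\log n/(k+1)}\;=\;n^{\,1/2 - \Theta(d)/(k+1)},
\]
which is less than $1$ throughout the regime $k\le d$ of the theorem. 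So no nontrivial base set $R$ exists, and there is nothing to amplify. Moreover, even if a set $R\subseteq\maF^d$ were produced, tensoring it would push you into dimension $d^m$, not $d$; the amplification must \emph{precede}, not follow, the passage to dimension $d$.

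The paper avoids this by reversing the order: it fixes a small auxiliary dimension $t$ with $C\log^2(p^t)<k$ (so $t=\Theta(\sqrt{k})$), applies Lemma~\ref{lemma:underpin} to the orthogonality graph on $\maF_p^t$ (where now $|\mC|<2^k$), and only then chooses $m$ with $t^m\le d$ and samples $r$ random $m$-tuples from $Q^m$, tensoring each tuple to get a vector in $\maF_p^{t^m}\subseteq\maF^d$. The randomness and the tensoring are intertwined: the union bound runs over all $k$-subsets $K$ of the $r$ indices and, crucially, uses independence across the $m$ tensor coordinates to drive the failure probability down by a power of $m/\ell$. This is what turns the exponent $\Theta(t)$ coming from a single coordinate into $\Theta(mt)=\Theta(\log d\cdot\sqrt{k})$ and ultimately $d^{\delta k/\log k}$. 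Finally, the ``principal obstacle'' you flag---checking the non-uniform co-degree hypotheses---is already absorbed into Lemma~\ref{lemma:underpin} and requires only that $G(p,t)$ be an $(n,D,\lambda)$-graph with $D\ge n/p$ and $\lambda=\Theta(\sqrt{D})$, which is classical; the actual obstacle is the quantitative one above.
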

\noindent
Note that Theorem \ref{thm: orthogonal result} implies the result by Haviv et al. By taking $\ell =1$, we retrieve the bound on $\beta(d,k,\maF)$ and by setting $A_1 = \dots = A_{\ell +1}$, we retrieve the bound on $\alpha(d,k,\ell,\maF)$.

The main distinction between the proofs of the results is that we unify their methods to find the nearly-orthogonal sets and their bipartite variants. To be more precise, their proof relies on the fact that the orthogonality graph $G$ arising from finite fields has strong pseudorandom properties, and uses two different counting results: 
\begin{enumerate}
    \item  Using an argument based on ideas of Alon and R\"odl \cite{alon2005sharp}, Haviv et al. derived an upper bound on the number of pairs of sets in pseudorandom graphs with no edge across (used to bound $\beta$). 
    \item  Using hypergraph containers \cite{balogh2015independent,saxton2016online}, they upper bounded the number of sets of size $k$ without a copy of $K_\ell$ (used to bound $\alpha$). 
\end{enumerate}
Having these counting results at hand, the proof proceeds by showing that a random induced subgraph of $G$ avoids sets of certain size which do not contain a copy of $K_\ell$, and it avoids pairs of subsets without an edge across. To make this actually work, a tensor-product trick along the lines of Alon and Szegedy \cite{alon1999large} is applied.

We consolidate these two counting results by developing a container-type result for $\ell$-tuples of subsets without any clique $K_\ell$ that contains a vertex in each subset of the tuple (Lemma \ref{lemma:underpin}). We will call such a tuple a `bad' $\ell$-tuple, as this is exactly the structure that we want to avoid. In particular, rather than counting the number of $\ell$-tuples of $k$-subsets without such a copy of $K_\ell$, we construct a small family $\mC$ of small subsets with the property that for any bad $\ell$-tuple $(U_1, \ldots, U_\ell)$, there exists $S \in \mC$ such that $U_i \subseteq S$ for some $i \in [\ell]$. 

The proof of Lemma \ref{lemma:underpin} uses an asymmetric, multipartite hypergraph container lemma. This lemma was first developed by Campos, Coulson, Serra and Wötzel \cite[Theorem 2.1]{campos2023typical}. They came up with this lemma in the study of sets with bounded doubling factor. An iterated application of this lemma on the right hypergrapgh yields Lemma \ref{lemma:underpin}, as we will show in Section \ref{section: orthogonal sets}. In Section \ref{section: containers}, we first review this asymmetric container lemma. We provide a different formulation to the lemma, which is practically equivalent to the one by Campos et al. The advantage of this formulation is that allows us to give a considerably shorter proof of the lemma, hopefully further improving accessibility of asymmetric, multipartite containers. 

\section{Containers with varying co-degree conditions}
\label{section: containers}
Hypergraph containers, developed by Balogh, Morris and Samotij \cite{balogh2015independent} and Saxton and Thomason \cite{saxton2016online}, provide a framework for studying independent sets in hypergraphs. On a high level, the main idea of hypergraph containers is to show that given any independent set $I$ in a hypergraph $\mH$, one can carefully choose a small subset $F \subseteq I$, called a \emph{fingerprint}, such that just based on $F$ one can confine $I$ to some $C \subseteq V(\mH)$, called a \emph{container}. Importantly, the size of $C$ is bounded away from $|V(\mH)|$. This innocent looking statement has far reaching consequences. For a thorough introduction, see \cite{balogh18survey}. 

To make use of the standard hypergraph container framework, one typically needs that the given hypergraph has fairly uniform degree and co-degree distribution. This will not be the case in our application. In particular, our hypergraph is $\ell$-partite and degrees depend on which part the vertex belongs to. More generally, the number of edges sitting on a given set of vertices depends on which parts these vertices lie in. Therefore, we require an asymmetric multipartite container lemma. The asymmetric lemma of Campos, Coulson, Serra and Wötzel \cite{campos2023typical} is developed for exactly this case. In this section, we present and prove a slight reformulation of the lemma, which allows for a more concise and straightforward analysis of the container algorithm. The statement is inspired by the one used by Nenadov and Pham \cite{nenadov2024short}. 


Let $V$ be an arbitrary set. For a subset $T \subseteq V$, we let $\langle T \rangle$ denote the upset of $T$, i.e. $\langle T \rangle = \{ S \subseteq V: T \subseteq S\}$.

\begin{lem} \label{lemma:container}
    Let $\ell \in \mathbb{N}$ and let $\mH$ be an $\ell$-partite $\ell$-uniform hypergraph on the vertex set $(V_1, \dots, V_\ell)$. Suppose there exist $p_1, \dots, p_\ell \in (0,1]$, $K > 1$ and a probability measure $\nu$ over $2^{V}$, $V = \bigcup_i V_i$, supported on $E(\mH)$ such that for all sets $S \subseteq [\ell]$, $T \in \prod_{i \in S} V_i$ and $i \in S$, we have 
    \begin{equation}
    \label{eq:spreadness condition}
        \nu ( \langle T \rangle ) \leq \frac{K}{|V_i|} \prod_{j \in S \setminus \{ i \}} p_j.
    \end{equation} 
    Let $U = (U_1, \dots, U_\ell)$, $U_i \subseteq V_i$, be an arbitrary independent set in $\mathcal{H}$ such that $|U_j| \geq p_j |V_j|$ for all $j \in [\ell]$.
    Then, 
    \begin{enumerate}[(i)] 
        \item there is a unique $\ell$-tuple $\mathbf{F} = (\emptyset, F_2, \ldots, F_\ell)$ associated with $(U_1, \ldots, U_\ell)$, where $F_j \subseteq U_j$ and $|F_j| = |V_j|p_j$ for $2 \le j \le \ell$; \
        
        \item there is an index $i = i(\mathbf{F}) \in [\ell]$, a corresponding set $C_i = C_i(\mathbf{F}) \subseteq V_i$ and a constant $\zeta = \zeta(K,\ell) >0$ such that $|C_i| \leq (1-\zeta) |V_i|$ and $U_i \subseteq F_i \cup C_i$.
        
        \item \label{container:reconstruct} Furthermore, given any $U_1', \ldots, U_\ell'$ such that $F_i \subseteq U_i' \subseteq U_i$, the $\ell$-tuple $\mathbf{F} = (\emptyset, F_2, \ldots, F_\ell)$ associated with $(U_1, \ldots, U_\ell)$ is also associated with $(U_1', \ldots, U_\ell')$.
    \end{enumerate}
\end{lem}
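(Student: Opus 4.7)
The plan is to adapt the greedy max-degree container algorithm of Nenadov and Pham~\cite{nenadov2024short} to the $\ell$-partite setting with the varying codegree conditions of~\eqref{eq:spreadness condition}. Fix an arbitrary linear order on each $V_j$, and build the fingerprints in reverse order $j = \ell, \ell-1, \ldots, 2$: at stage $j$, having already built $F_{j+1}, \ldots, F_\ell$, define a ``link'' measure $\nu_j$ on $V_1 \cup \cdots \cup V_j$ obtained by projecting $\nu$ to the first $j$ coordinates restricted to edges whose remaining coordinates lie in $F_{j+1} \times \cdots \times F_\ell$. Iteratively pick vertices from $U_j$ maximising a $\nu_j$-weighted degree (with ties broken by the linear order) and add them to $F_j$, until $|F_j| = |V_j| p_j$ (the degenerate case where $U_j$ is too small is handled separately and is trivial).

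Once all fingerprints are built, define for each candidate index $i \in [\ell]$ the container $C_i = \{ v \in V_i : \deg_{\nu_i}(v) \le \tau_i \}$ for a suitable threshold $\tau_i = \tau_i(K, \ell)$, with the convention $F_1 = \emptyset$ and $\nu_1$ obtained by conditioning on $F_2, \ldots, F_\ell$. The inclusion $U_i \subseteq F_i \cup C_i$ follows because any $v \in U_i \setminus F_i$ must have had $\nu_i$-weighted degree below $\tau_i$ at the end of stage $i$, since otherwise (using $F_i \subseteq U_i$, so that $v$ was a valid greedy candidate) the algorithm would have selected $v$ into $F_i$ before terminating. The independence of $(U_1, \ldots, U_\ell)$ is what makes the link measures $\nu_j$ compatible across stages and keeps the greedy choices consistent.

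To bound $|C_i|$ for some $i$, I would apply~\eqref{eq:spreadness condition} at each stage $j$ with $S = \{j, \ldots, \ell\}$ to bound the $\nu_j$-weight concentrated on low-degree vertices, then propagate these bounds across stages via a telescoping argument, finishing with a pigeonhole over $i \in [\ell]$ to locate one index with $|C_i| \le (1-\zeta)|V_i|$ for some $\zeta = \zeta(K, \ell) > 0$. Part~\ref{container:reconstruct} is then immediate: the algorithm is deterministic and at stage $j$ only consults $F_{j+1}, \ldots, F_\ell$ together with the selected subset of $U_j$, so running it on any $(U_1', \ldots, U_\ell')$ with $F_j \subseteq U_j' \subseteq U_j$ produces the same output. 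The main obstacle I anticipate is calibrating the thresholds $\tau_i$ and the scoring function so that the product-of-$p_j$ structure in~\eqref{eq:spreadness condition} propagates through all stages without cumulative losses: each stage $j$ contributes a factor $p_j$ to the codegree bound that must be matched against the fingerprint size $|V_j| p_j$, so that the final $\zeta$ depends only on $K$ and $\ell$ and not on the individual $p_j$'s.
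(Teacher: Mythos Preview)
Your plan follows the paper's in broad outline—build $F_\ell$ greedily, pass to a link measure, repeat—but it is missing the two mechanisms that make the passage from stage $j$ to stage $j-1$ work, and without them the ``telescoping'' you allude to cannot be carried out.

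First, the paper does \emph{not} unconditionally recurse. After building $F_\ell$ it checks whether the captured mass $\nu(\mH')$ is at least $\alpha p_\ell$ for $\alpha = 2^{-\ell-2}$. If $\nu(\mH') < \alpha p_\ell$, then every $v \in U_\ell \setminus F_\ell$ has $\nu(\langle v\rangle \cap \mR) \le \alpha/|V_\ell|$, and one outputs the container $C_\ell$ for $U_\ell$ immediately; the remaining $F_j$ are then chosen canonically and play no role. Only when $\nu(\mH') \ge \alpha p_\ell$ does the link measure $\nu'$ on $V_1 \cup \cdots \cup V_{\ell-1}$ satisfy the spreadness condition~\eqref{eq:spreadness condition} with a new constant $K' = 2K/\alpha$, which is what you need to continue. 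Your proposal builds all fingerprints $F_\ell, \ldots, F_2$ regardless and then pigeonholes over $i$ at the end; but if at some stage $j$ the captured mass was small, your $\nu_{j-1}$ need not be spread at all, and the bounds on the later $C_i$'s have no content.

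Second, and more seriously, even in the good case $\nu(\mH') \ge \alpha p_\ell$ the link measure is \emph{not} simply the restriction of $\nu$ to edges through $F_\ell$. The paper maintains a deletion set $\mD$: whenever a partial transversal $X \subseteq V \setminus V_\ell$ becomes heavy in $\mH'$ in the sense of~\eqref{eq:deletion condition}, all remaining edges through $X$ are removed from $\mR$ before the next greedy step. This is what guarantees the uniform upper bound~\eqref{eq:end of process} on $\nu(\langle X\rangle \cap \mH')$, which in turn is exactly the spreadness of $\nu'$. The key inequality $\nu(\mH') > 2^{-\ell} p_\ell\, \nu(\mD)$ then controls how much was deleted. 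Your $\nu_j$, defined as plain projection through $F_{j+1}\times\cdots\times F_\ell$, has no such codegree control: a single heavy $X$ can dominate it, and applying~\eqref{eq:spreadness condition} with $S=\{j,\ldots,\ell\}$ bounds $\nu(\langle T\rangle)$, not $\nu_j(\langle T\rangle)$, so the bound does not transfer after normalisation. The obstacle you flag in your last sentence is precisely this, and the deletion-plus-dichotomy structure is the missing idea that resolves it.
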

\noindent
Whenever $\nu$ is the uniform probability measure over $\mH$, i.e. $\nu(X) = 1/e(\mH)$ if $X \in E(\mH)$ and $\nu(X) = 0$~otherwise, this condition is just a co-degree condition analogous to the one in \cite{balogh2015independent,saxton2016online}. Indeed, we have 
\[ \nu( \langle T \rangle) = \sum_{S \subseteq V: T \subseteq S} \nu(S) = \Delta_T \cdot \frac{1}{e(\mH)},  \]
where $\Delta_T $ denotes the codegree of the set $T$, that is, the number of hyperedges in $\mathcal{H}$ which contain $T$.
Moreover, it is exactly when considering the uniform measure, one retrieves the asymmetric container lemma of Campos et al. 
Our proof follows the recent proof of the original hypergraph containers by Nenadov and Pham closely.
\begin{proof}[Proof of Lemma \ref{lemma:container}]
    We prove the lemma by induction on $\ell$. To that end, first consider $\ell = 1$. Let $C_1 \subseteq V_1$ to be the set of all vertices $v \in V_1$ with $\nu(\{v\})  = 0$. Since $\nu$ is supported on $E(\mH)$, any independent set avoids $V_1 \setminus C_1$. There are at least $|V_1| / K$ vertices with non-zero measure, so the lemma holds for $\zeta = 1/K$.
    We proceed to the case where $\ell \geq 2$. We use the following notation: given a subset $X \subseteq V$, let $\mI_\mH(X)$ denote the set of all indices for which $V_i$ intersects $X$, that is, 
    \[ \mI_\mH(X) := \{i \in [\ell]: X \cap V_i \neq \emptyset \}.\]
    
    \paragraph{Constructing fingerprint $F_\ell$.} Set $F_\ell = \emptyset \subseteq U_\ell$, $\mL = \emptyset \subseteq 2^{V \setminus V_\ell}$, $\mD, \mH ' = \emptyset \subseteq \mH$ and $\mR = \mH$. Repeat the following steps $|V_\ell| p_\ell$ times: 
    \begin{itemize}
        \item Pick the vertex $v \in U_\ell \setminus F_\ell$ with the largest value $\nu(\langle v \rangle \cap \mR)$. Tie-breaking is done in some canonical way, that is, following an ordering of the vertices agreed upon at the beginning of the algorithm. Note that we can always pick such a vertex $v$ since $|U_\ell| \geq p_\ell |V_\ell|$. 
        
        \item Add $v$ to $F_\ell$ and set $\mH' = \mH' \cup (\langle v \rangle \cap \mR)$.

        \item For each $X \in 2^{V \setminus V_\ell} \setminus \mL$ of size $|X| \leq \ell -1$, such that there exists $s \in \mI_\mH(X)$ for which the following inequality holds:
        \begin{equation}
        \label{eq:deletion condition}
        \nu( \langle X \rangle \cap \mH') > \frac{K}{|V_s|} \cdot p_\ell \prod_{ i \in \mI_\mH(X) \setminus \{s\}} p_i,
        \end{equation}
        add $X$ to $\mL$, set $\mD = \mD \cup ( \langle X \rangle \cap \mR)$ and set $\mR = \mH \left[ V \setminus F_\ell \right] \setminus \mD$.
    \end{itemize}  

    \paragraph{Observations on the process.} Before we show how to choose other fingerprints $F_2, \ldots, F_{\ell-1}$ and construct a container $C_i$ for some $U_i$, we first make some observations about the described process. First of all, for any subset $X \subseteq V \setminus V_\ell$, because of \eqref{eq:spreadness condition}, the value of $\nu(\langle X \rangle \cap \mH')$ increases by at most 
    $$
        \nu(\langle X \cup v \rangle) \leq \frac{K}{|V_\ell|} \cdot p_\ell \prod_{i \in \mI_\mH(X)} p_i
    $$
    after adding $v$ to $F_\ell$. Note that the value of $\nu(\langle X \rangle \cap \mH')$ does not change anymore once $X$ is added to $\mL$, and this happens when \eqref{eq:deletion condition} is satisfied. Therefore, at the end of the process it holds that for every $X \subseteq V \setminus V_\ell$ of size $|X| \leq \ell -1$, including those that are in $\mL$, and for all $s \in \mI_\mH(X)$, we have

    \begin{equation}
    \label{eq:end of process}
    \nu ( \langle X \rangle \cap \mH') \leq \frac{2K}{|V_s|} \cdot p_\ell \prod_{i \in \mI_\mH(X) \setminus \{s\} } p_i.
    \end{equation}
    Secondly, given $U_\ell' \subseteq U_\ell$ such that $F_\ell \subseteq U_\ell$, if we start the process with $U'_\ell$ instead of $U_\ell$, it produces the same output $F_\ell$, as well as the same $\mH'$ and $\mR$.

    Next, we derive a lower bound on $\nu(\mH')$. To start, we know that for each edge $e \in \mD$, there exists $X \in \mL$ such that $e \in \langle X \rangle$. Thus, 
    \[ \sum_{X \in \mL} \nu(\langle X \rangle) \geq \nu( \mD). \]
    Also, we know by \eqref{eq:deletion condition}, that for each $X \in \mL$ there is an $s(X) \in \mI_\mH(X)$ such that

    $$
        \sum_{X \in \mL} \nu( \langle X \rangle \cap \mH') > \sum_{X \in \mL} \frac{K}{|V_{s(X)}|} p_\ell \prod_{i \in \mI_\mH(X) \setminus \{s(X)\}} p_i \\
        \stackrel{\eqref{eq:spreadness condition}}{\geq} p_\ell \sum_{X \in \mL} \nu( \langle X \rangle) \geq p_\ell \nu(\mD).
    $$
    Moreover, each edge $e \in \mH'$ contributes at most to $2^\ell$ terms in $\nu( \langle X \rangle \cap \mH')$. So, 

    \[ \nu( \mH') \geq 2^{-\ell} \sum_{X \in \mL} \nu( \langle X \rangle \cap \mH') > 2^{-\ell} p_\ell \nu(\mD).\]
    In conclusion, we have now derived a relationship between $\nu(\mH')$ and $\nu(\mD)$:
    
    \begin{equation}
    \label{eq:measure of deletion}
    \nu( \mH') > 2^{-\ell} p_\ell \nu(\mD).
    \end{equation}
    Secondly, we relate the measure on $\mH'$ to the measure on the vertices in $U_\ell$ that are not contained in the fingerprint $F_\ell$. Let $\mR_i$ denote the hypergraph $\mR$ at the moment when the $i$-th vertex $v_i$ is added to $F_\ell$. Since $\mR$ is non-increasing and we always chose $v_i$ with maximal value $\nu(\langle v \rangle \cap \mR)$, we have

    \[ \nu (\mH') = \sum_{i =1}^{|F_\ell|} \nu(\langle v_i \rangle \cap \mR_i) \geq \sum_{i = 1}^{|F_\ell|} \max_{v \in U_\ell \setminus F_\ell} \nu (\langle v \rangle \cap \mR). \] 
    Therefore, we can conclude that 

    \begin{equation}
    \label{eq:max degree condition}
    \nu( \mH') \geq |V_\ell| p_\ell \max_{v \in U_\ell \setminus F_\ell} \nu( \langle v \rangle \cap \mR).
    \end{equation}

    \paragraph{Constructing the container.} Set $\alpha = 2^{-\ell -2}$ and consider two cases. 
    \vspace{0.2cm}

\noindent
\textbf{Case 1:} $\nu(\mH') < \alpha p_\ell$. By \eqref{eq:measure of deletion}, we have $\nu(\mD) < \frac{1}{4}$, so, using $\mH = \mH' \cup \mR \cup \mD$, $\nu(\mR) > \frac{1}{2}$. By \eqref{eq:max degree condition} it holds that for every $v \in U_\ell \setminus F_\ell$, 

\[ \nu(\langle v \rangle \cap \mR) \leq \frac{\alpha}{|V_\ell|}. \]
We use this measure to define our container. Let $C_\ell$ be the set of all vertices $v \in V_\ell \setminus F_\ell$ such that $\nu( \langle v \rangle \cap \mR) \leq \frac{\alpha}{|V_\ell|}$. Clearly, we have $U_\ell \setminus F_\ell \subseteq C_\ell$. Moreover, by \eqref{eq:spreadness condition},

\[ \nu(\mR) \leq \sum_{v \in C_\ell} \nu(\langle v \rangle \cap \mR) + \sum_{w \in V_\ell \setminus (F_\ell \cup C_\ell)} \nu(\langle w \rangle \cap \mR) < \alpha + (|V_\ell| - |C_\ell|) \frac{K}{|V_\ell|}. \]
Hence, 
$$
    |C_\ell| < |V_\ell| - (\nu(\mR) - \alpha) \frac{|V_\ell|}{K} < (1- \zeta) |V_\ell|
$$ 
for $\zeta = 1/(4K)$. Take $F_j \subseteq U_j$ to be a subset of size $|V_j|p_j$ chosen in some canonical way, for $2 \le j < \ell$. These fingerprints are defined for the convenience of having all fingerprints of the specified size. So we have obtained the $\ell$-tuple of fingerprints $\mathbf{F} = (\emptyset, F_2, \dots, F_\ell)$ and index $i(\mathbf{F}) = \ell$ such that all conditions of the lemma are satisfied.

\vspace{0.2cm}

    \noindent
    \textbf{Case 2:} $\nu(\mH') \geq \alpha p_\ell$. We use the induction hypothesis to find fingerprints for $U_1, \dots, U_{\ell-1}$ and a container for one of these sets. Let $\mH''$ be the $(\ell - 1)$-uniform $(\ell - 1)$-partite hypergraph consisting of sets $X$ such that $X = H' \setminus F_\ell$ for some $H' \in \mH'$. Let $\nu'$ be the probability measure over $2^{V \setminus V_\ell}$ given by

    \[\nu'(X) \propto \left\{
        \begin{array}{ll}
        \nu((X \cup 2^{F_\ell}) \cap \mH') & \mbox{if } X \in \mH'' \\
        0 & \mbox{otherwise.}
        \end{array}
    \right. \]
    Consider some $X \in \mathcal{H}''$. For $s \in \mI_{\mH''}(X)$, we have 
$$
    \nu'(\langle X \rangle) = \frac{\nu(\langle X \rangle \cap \mH')}{\nu(\mH')}
    \stackrel{\eqref{eq:end of process}}{\leq} \left( \frac{2K}{|V_s|}p_\ell \prod_{i \in \mI_\mH(X) \setminus \{s\}}p_i \right) \frac{1}{\alpha p_\ell}
    \leq \frac{K'}{|V_s|} \prod_{i \in \mI_\mH(X) \setminus \{s\}} p_i,
$$
for $K' = 2K/\alpha$.
Furthermore, $(U_1, \dots, U_{\ell -1})$ is an independent set in $\mH''$. Thus, by the induction hypothesis, there exists $(F_2, \ldots, F_{\ell-1})$ such that  $F_j \subseteq U_j$ and $|F_j| = |V_j|p_j$ for all $2 \le j \le \ell -1$, and a suitable container $C_i = C_i(F_2, \dots, F_{\ell -1})$ for some $i \in [\ell -1]$ such that $U_i \subseteq C_i \cup F_i$ and $|C_i| \leq (1 - \zeta) |V_i|$.

\end{proof}

\section{Nearly Orthogonal Sets}
\label{section: orthogonal sets}
The following result has the role of Theorem 2.4 and Theorem 2.6 in \cite{haviv2024larger}. Unlike these two theorems, here we do not count the number of `bad' tuples of sets. Rather, we give an upper bound on the size of a family which underpins such tuples. To this end, we will need the notion of $(n,D,\lambda)$-graphs. Recall that a $D$-regular graph on $n$-vertices is an $(n,D,\lambda)$-graph if the absolute values of all but the largest eigenvalue of its adjacency matrix are at most $\lambda$.
Given a graph $G$, we denote with $\mathfrak{I}_\ell(G)$ the family of all $\ell$-tuples $(U_1, \ldots, U_\ell)$, where $U_i \subseteq V(G)$ for each $i \in [\ell]$, such that there is no copy $(v_1, \ldots, v_\ell)$ of $K_\ell$ in $G$ with $v_i \in U_i$ for $i \in [\ell]$. Note that there is no condition on disjointness of the $U_i$.

\begin{lem} \label{lemma:underpin}
    Let $c \in (0, 1/2]$ be a constant and $n$ a sufficiently large integer. Suppose $G$ is an $(n, D, \lambda)$-graph with $D \ge c n$. There exists $C = C(\ell, c) > 0$ and a family $\mC$ of subsets of $V(G)$ with the following properties:
    \begin{itemize}
        \item $|\mC| \le n^{C \log n}$;
        \item each $S \in \mC$ is of size $|S| \le C \lambda$;
        \item for every $(U_1, \ldots, U_\ell) \in \mathfrak{I}_\ell(G)$, there exists $S \in \mC$ and $i \in [\ell]$ such that $U_i \subseteq S$.
    \end{itemize}    
\end{lem}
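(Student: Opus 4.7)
The plan is to apply \Cref{lemma:container} iteratively to a natural auxiliary hypergraph. Let $\mH$ be the $\ell$-partite $\ell$-uniform hypergraph with classes $V_1,\dots,V_\ell$, each a labelled copy of $V(G)$, whose edges are the $\ell$-sets $\{v_1,\dots,v_\ell\}$ with $v_j \in V_j$ such that the corresponding vertices span a copy of $K_\ell$ in $G$. With this identification, $\mI_\ell(G)$ is exactly the collection of independent tuples in $\mH$, so it suffices to build a small family $\mC$ of small subsets of $V(G)$ such that every independent tuple in $\mH$ has at least one coordinate contained in a member of $\mC$.

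To verify the spreadness condition I would use pseudorandom $K_\ell$-counting. For any subsets $W_1,\dots,W_\ell \subseteq V(G)$ with $|W_j| \ge C\lambda$ (where $C = C(\ell,c)$ is a constant chosen below), the standard counting lemma for $(n,D,\lambda)$-graphs with $\lambda = \Theta(\sqrt D)$ and $D = \Theta(n)$ gives: for every $S \subseteq [\ell]$ and every clique $(v_j)_{j \in S}$ of $G$, the number of $K_\ell$ copies in $G$ with $j$-th vertex in $W_j$ extending $(v_j)_{j \in S}$ is $O\bigl(\prod_{j \notin S} |W_j|\bigr)$, while the total number of such copies is $\Omega\bigl(\prod_j |W_j|\bigr)$. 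Taking $\nu$ to be the uniform measure on the edges of $\mH[W_1,\dots,W_\ell]$ and setting $p_j := 1/|W_j|$, the ratio gives $\nu(\langle T \rangle) \le K \prod_{j \in S} 1/|W_j|$ for a constant $K = K(\ell,c)$, which is exactly \eqref{eq:spreadness condition} for every $i \in S$; each fingerprint returned by \Cref{lemma:container} then has size $|W_j|p_j = 1$.

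Starting from $W_j^{(0)} := V(G)$ for all $j$, I iterate as follows: at round $t$, apply \Cref{lemma:container} to $\mH[W_1^{(t)},\dots,W_\ell^{(t)}]$ with the independent tuple $(U_1 \cap W_1^{(t)},\dots,U_\ell \cap W_\ell^{(t)})$, obtaining singletons $F_2^{(t)},\dots,F_\ell^{(t)}$, an index $i_t \in [\ell]$, and a container $C_{i_t}^{(t)}$ of size at most $(1-\zeta)|W_{i_t}^{(t)}|$ with $U_{i_t} \cap W_{i_t}^{(t)} \subseteq F_{i_t}^{(t)} \cup C_{i_t}^{(t)}$. Set $W_{i_t}^{(t+1)} := C_{i_t}^{(t)}$ and $W_j^{(t+1)} := W_j^{(t)}$ for $j \ne i_t$, and halt at the first round $T$ with $\min_j |W_j^{(T)}| \le C\lambda$. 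Because $\prod_j |W_j^{(t)}|$ shrinks by a factor of at least $(1-\zeta)$ each round and must exceed $(C\lambda)^\ell$ until halting, the halting time satisfies $T = O(\log(n/\lambda)) = O(\log n)$, and all $|W_j^{(t)}|$ remain above $C\lambda$ before halting so the spreadness bound stays valid throughout. Picking $j^* \in \arg\min_j |W_j^{(T)}|$ and setting $S := W_{j^*}^{(T)} \cup \bigcup_{t \le T} F_{j^*}^{(t)}$, a round-by-round unrolling of the container relation yields $U_{j^*} \subseteq S$, and $|S| \le C\lambda + T = O(\lambda)$.

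Finally, by part~(ii) of \Cref{lemma:container}, the sets $W_j^{(t)}$, the indices $i_t$, and the containers $C_{i_t}^{(t)}$ at each round are deterministic functions of the preceding fingerprints, so $|\mC|$ is bounded by the number of possible fingerprint sequences of length at most $T$. Each round contributes $\ell - 1$ singletons drawn from $V(G)$, giving at most $n^{(\ell-1)T} = n^{O(\log n)}$ such sequences, as required. I expect the principal technical obstacle to be the pseudorandom $K_\ell$-counting used in the second paragraph: one needs a counting lemma for $(n,D,\lambda)$-graphs robust enough to supply a \emph{uniform} constant $K$ in the spreadness condition on subsets as small as $\Theta(\lambda)$, which in turn dictates how large the termination threshold $C\lambda$ must be chosen so that the multiplicative error terms coming from $\lambda$ are absorbed.
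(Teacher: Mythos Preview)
Your proposal is correct and follows essentially the same route as the paper: build the $\ell$-partite $K_\ell$-hypergraph, verify \eqref{eq:spreadness condition} for the uniform edge measure via the Expander Mixing Lemma with $p_j = 1/|W_j|$ (so fingerprints are singletons), iterate \Cref{lemma:container} shrinking one $W_j$ per round until some $W_j$ drops below $C\lambda$, and bound $|\mC|$ by the number of fingerprint sequences. The only point you omit, which the paper handles explicitly, is the degenerate case where some $|U_i| = O(\log n)$ to begin with---then the iteration may run out of vertices from which to choose fingerprints, and the paper simply adds all subsets of size $C\log n$ to $\mC$ upfront to cover such tuples.
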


\noindent
Note that we make no assumptions on $\lambda$ in Lemma \ref{lemma:underpin}. However, as $D \ge c n$, for $\lambda > |D|/(c \cdot C)$ we have $C \lambda > n$, thus the upper bound on $S \in \mathcal{C}$ becomes trivial.

\begin{proof}[Proof of Lemma \ref{lemma:underpin}]
Form a hypergraph $\mH$ by taking $\ell$ disjoint copies of $V(G)$, named $V_1, \ldots, V_\ell$, and putting a hyperedge on top of $(v_1, \ldots, v_\ell)$, $v_i \in V_i$ for $i \in [\ell]$, if the corresponding vertices in $G$ form a copy of $K_\ell$.

Using the Expander Mixing Lemma \cite[Corollary 9.2.5]{alon16probmethod} and a standard embedding argument\footnote{See e.g.~ \cite[Proposition 3.3]{conlon2014extremal} or \cite[Theorem 4.10]{krivelevich2006pseudo}. Note that these results show both a lower and upper bound. We only require the lower bound, which can be more easily obtained with an inductive argument.}, for any $\ell$-tuple of subsets $(W_1, \dots, W_\ell)$, $W_i \subseteq V_i$, we have 
\begin{equation} \label{eq:aux_hypergraph_edge_bound}
    e(\mH[W_1 \cup \ldots \cup W_\ell]) \geq \frac{1}{2} \prod_{i = 1}^\ell |W_i| \left( \frac{D}{n} \right)^{\binom{\ell}{2}} > c^{\ell^2} \prod_{i = 1}^\ell |W_i|,
\end{equation}
provided $|W_i| \geq C' \lambda \left( n / D \right)^{\ell -1}$ for large enough constant $C'$. 

Note that if $(U_1, \ldots, U_\ell) \in \mathfrak{I}_\ell(G)$ then $(U_1, \ldots, U_\ell)$ is an independent set in $\mH$. First of all, add to $\mC$ all subsets of $V(G)$ of size $C \log n$, where $C = C' c^{-\ell} \ell^2 \zeta^{-1}$. The meaning of the different factors in this constant will become apparent in different later steps. This takes care of all $(U_1, \ldots, U_\ell) \in \mathfrak{I}_\ell(G)$ with some $|U_i| \le C \log n$.  Consider now some $(U_1, \ldots, U_\ell) \in \mathfrak{I}_\ell(G)$ with $|U_i| > C \log n$ for all $i \in [\ell]$. Set $W_i = V_i$, $U_i' = U_i$ and $F_i = \emptyset$ for $i \in [\ell]$, and as long as each $W_i$ is of size at least $C \lambda \left( n / D \right)^{\ell-1}$, repeat the following:
\begin{itemize}
    \item Let $\mH' = \mH[W_1 \cup \ldots \cup W_\ell]$, and define a measure $\nu$ on $2^{V(\mH')}$ to be uniform on $\mH'$:

    \[ \nu(X) = \left\{ 
    \begin{array}{ll}
        \frac{1}{e(\mH')} & \text{if }  X \in \mH' \\
         0 & \text{otherwise.}
    \end{array}
    \right. \]
    By \eqref{eq:aux_hypergraph_edge_bound}, the condition \eqref{eq:spreadness condition} is satisfied for $K = c^{-\ell^2}$ and $p_i = 1 / |W_i|$ for each $i \in [\ell]$. 
    
    \item As $(U_1', \ldots, U'_\ell)$ is an independent set in $\mH'$, apply Lemma \ref{lemma:container} to obtain fingerprints $(\emptyset, F'_2 \ldots, F'_\ell)$. Let $i \in [\ell]$ be the index and $C_i \subseteq W_i$ the set corresponding to $(\emptyset, F'_2, \ldots, F'_\ell)$. Note that for each $j \in [2,\ell]$, $F'_j$ is a single vertex, and $|C_i| \le (1 - \zeta)|W_i|$, where $\zeta = \zeta(K, \ell) > 0$. Moreover, $U_i' \subseteq F'_i \cup C_i$.

    \item Set $U_i' = U_i' \cap C_i$, $W_i = C_i$, and $F_j := F_j \cup F'_j$ for $j \in [2,\ell]$.  Proceed to the next round.
\end{itemize}
As the sets $W_i$ are shrinking, the hypergraph $\mH'$ might become increasingly more unbalanced. The average degree of a vertex in $W_i$ is $\Theta(\prod_{j \neq i} |W_j|)$, meaning that if $|W_i| \ll |W_{i'}|$, then the average degree of vertices in $W_i$ is significantly larger than of those in $W_{i'}$. This dependency is captured by the parameter $p_i$ and is the main reason the standard hypergraph containers do not apply.

As one of the $W_i$'s shrinks by a factor of at least $1 - \zeta$ in each round, the whole process terminates after at most $t = \ell \zeta^{-1} \log n$ iterations. The size of each $F_i$ is thus at most $t$, and for convenience we add to each $F_i$ some elements from $U_i \setminus F_i$, in a canonical way, so that $|F_i| = t$. Let $z \in [\ell]$ denote the index of the set $W_z$ which is smaller than $C' \lambda (n / D)^{\ell - 1}$ at the end of the process. By the property \ref{container:reconstruct} of Lemma \ref{lemma:container}, we can reproduce each iteration of the process if we start with $(\emptyset, F_2, \ldots, F_\ell)$ instead of $(U_1, \ldots, U_\ell)$ (it is interesting to note that $U_1$ plays no role in the process). Therefore, $z$ and $W_z$ are indeed a function of $(F_2, \ldots, F_\ell)$. Finally, note that for each $j \in [\ell]$, $U_j \subseteq F_j \cup W_j$ (where $F_j = \emptyset$ for $j = 1$) and so specifically this is also true for $j = z$. When $n$ is large enough so that $\lambda \gg t$, we obtain 
$$
    |F_z \cup W_z| \le t + C' c^{-\ell} \lambda < C \lambda.
$$
We expand the family of subsets $\mC$ in the following way: for each $\mathbf{F} = (\emptyset, F_2, \ldots, F_\ell)$ such that $|F_2| = \ldots = |F_\ell| = t$ and $\mathbf{F}$ is the resulting fingerprint when running the process with some $(U_1, \ldots, U_\ell) \in \mathfrak{I}_{\ell}(G)$, let $i = i(\mathbf{F}) \in [\ell]$ and $W_i = W_i(\mathbf{F}) \subseteq V(G)$, and add $F_i \cup W_i$ to $\mC$. This guarantees that for every $(U_1, \ldots, U_\ell) \in \mathfrak{I}_\ell(G)$, one of the sets $U_i$ is a subset of some $S \in \mC$. The size of $\mC$ is at most
$$
    \binom{n}{C \log n} + \binom{n}{\ell \zeta^{-1} \log n}^\ell < n^{C \log n}.
$$

\end{proof}

With Lemma \ref{lemma:underpin} at hand, the proof of Theorem \ref{thm: orthogonal result} follows the structure of Haviv et al.~\cite{haviv2024larger}.

\begin{proof}[Proof of \Cref{thm: orthogonal result}.]
Let $C = C(\ell, 1/2p) > 0$ be the constant given by Lemma \ref{lemma:underpin}, and choose $t$ to be the largest integer such that $C \log(p^t) < k/6$. Furthermore, we assume $k$ is large enough such that we can apply Lemma \ref{lemma:underpin} with $n = p^t$. For small $k$ we pick $\delta(p, \ell)$ to be sufficiently small to satisfy Theorem \ref{thm: orthogonal result} when using $d$ basis vectors in $\maF_p^d$. Let $m \ge 1$  be the largest integer such that $t^m \le d$.

Let $Q$ denote the set of non-self-orthogonal distinct vectors in $\mathbb{F}_p^t$. Note that $|Q| \ge p^{t-1}$. Let $V \subseteq Q^m$ be a set of 
$$
    r = \lfloor p^{m t / (4 \ell)} \rfloor
$$ 
$m$-tuples of vectors from $Q$, chosen uniformly at random and independently. Let us denote these $m$-tuples as $(v_1^{(i)}, \ldots, v_m^{(i)})$, for $i \in [r]$. Note that it is possible that some of these $m$-tuples are identical. 

Consider the orthogonality graph $G(p,t)$ which has vertex set equal to the vectors in $\maF_p^t$ and $u \sim v$ if $\langle u,v \rangle =0$. It is well known that this graph is an $(n,D,\lambda)$-graph (e.g.~see \cite[Page 220]{alon1997constructive})  with the following parameters:
\[n = p^t-1,  \hspace{10pt}  D=p^{t-1}-1, \hspace{5pt} \text{and} \hspace{7pt} \lambda = p^{t/2-1} (p -1). \]
Since $t$ is sufficiently large, we can apply Lemma \ref{lemma:underpin}. Let $\mC$ be the family given by Lemma \ref{lemma:underpin} together with constant $C$. Recall that $|\mC| \leq n^{C \log n}$ and that all elements in $\mC$ have size at most $C \lambda$. 

Given a set $K = \{i_1, \ldots, i_k\} \subseteq [r]$ of size $|K| = k$ and $z \in [m]$, let us denote with $\mB_z(K)$ the event that $\{v_z^{(i_1)}, \ldots, v_z^{(i_k)}\} \subseteq S$ for some $S \in \mC$. By the union bound, we have
\begin{align*}
    \Pr[\mB_z(K)] 
    &\le\sum_{S \in \mC} \Pr\left[ \{v_z^{(i_1)}, \ldots, v_z^{(i_k)}\} \subseteq S \right] \\
    &= \sum_{S \in \mC} \left( \frac{|S|}{|Q|} \right)^k \le |\mC| \left( \frac{C \lambda}{p^{t-1}} \right)^k \\
    &\le 2^{C \log^2(p^t)} \left( C p^{-t/2 + 1} \right)^k < \left(C p^{-t/3 + 1} \right)^k.
\end{align*}
Let $\mB(K)$ denote the event that $\mB_z(K)$ happens for at least $m/\ell$ different values of $z \in [m]$. By union bound and the fact that the events $\mB_z(K)$ are independent for different values of $z$, we have
\begin{align*}
    \Pr[\mB(K)] \le \sum_{\substack{Z \subseteq [m] \\ |Z| \ge m/\ell}} \Pr\left[ \bigwedge_{z \in Z} \mB_z(K)\right] \le 2^m \left(C p^{-t/3 + 1} \right)^{k m / \ell} \le \left(2 C p^{-t/3 + 1} \right)^{k m / \ell},
\end{align*}
where the last inequality follows from $k \ge \ell$. Finally, let $\mB$ denote the event that $\mB(K)$ happens for some $K \subseteq [r]$ of size $k$. By one last application of the union bound, we have
$$
    \Pr[\mB] \le \sum_{\substack{K \subseteq [r] \\ |K| = k}} 
    \Pr[ \mB(K) ] \le \binom{r}{k} \left(2 C p^{-t/3 + 1} \right)^{km/\ell} \le p^{k m t  / (4\ell)} \left(2 C p^{-t/3 + 1} \right)^{km / \ell} \le \left( 2Cp^{-t/12 + 1} \right)^{km / \ell} < 1.
$$

\noindent
The last inequality holds for large enough $t$, which is implied by the assumption that $k$ is large. 

To summarise, there is a choice for $V$ for which $\mB$ does not occur. Now recall that for two vectors $u = (u_1, \ldots, u_t),v = (v_1, \ldots, v_t) \in \maF^t$, their tensor product $u \otimes v = w \in \maF^{t^2}$ is the vector whose entries are indexed by $(a,b) \in [t]^2$ and are defined by $w_{(a,b)} = u_a  v_b$. Form the set $W$ by taking the tensor product $w_i = v_1^{(i)} \otimes \ldots \otimes v_m^{(i)}$ for each $i \in [r]$. Note that all the vectors in $W$ are non-self-orthogonal, and if $\langle v_z^{(i)}, v_z^{(j)} \rangle = 0$ for some $z \in [m]$ and $i, j \in [r]$, then $\langle w_i, w_j \rangle = 0$. We claim that the corresponding set $W$ satisfies that every $\ell$-tuple of subsets $(W_1, \dots, W_\ell)$, with each $W_i$ of size $k$, contains $\ell$ vectors $\{w_i \in W_i\}_{i \in [\ell]}$  that are pairwise orthogonal. Consider one such $\ell$-tuple, and let $K_i \subseteq [r]$ denote the indices of vectors in $W_i$. As none of the events $\mB(K_i)$ happen, the set $Z_i \subseteq [m]$ of values $z \in [m]$ for which $\mB_{z}(K_i)$ happens is of size $|Z_i| < m/\ell$. Therefore, there exists $z \in [m]$ such that none of the events $\mB_z(K_1), \ldots, \mB_z(K_\ell)$ happen. Let $U_i = \{v_z^{(j)} \colon j \in K_i\}$ for each $i \in [\ell]$. Then $(U_1, \ldots, U_\ell) \not \in \mathfrak{I}_{\ell}(G(p, t))$ as otherwise we would have $U_i \subseteq S$ for some $i \in [\ell]$ and $S \in \mC$, i.e.~$B_z(K_i)$ happens for some $i$, which is not the case. This means there is $i_j \in K_j$ for each $j \in [\ell]$ such that all the vectors in $\{v_z^{(i_1)}, \ldots, v_z^{(i_\ell)}\}$ are pairwise orthogonal. By the observed property of tensor products, we consequently have that all the vectors in $\{w_{i_1}, \ldots, w_{i_\ell}\}$ are also pairwise orthogonal.

To conclude, we have obtained a set $W \subseteq \mathbb{F}_p^{mt}$ of size 
$$
    |W| = r = p^{mt/(4\ell)} = p^{\Theta(\log d \cdot t / \log t)} = d^{\Theta(k / \log k)}
$$
with the desired property.
Now we add $d - t^m$ zeroes at the end of each vector in $W$ such that we obtain a subset of $\maF_p^d$ and conclude the proof.
\end{proof}

\bibliographystyle{abbrv}
\bibliography{references}

\end{document}